\def\eps{\varepsilon}
\def\e{{\rm e}}
\def\d{{\rm d}}
\def \R{\mathbb{R}}
\def \C{\mathbb{C}}
\def \CC{\mathcal{C}}
\def\T{ \mathbb{T}}
\def\A{\mathcal{A}}
\def\eps{\varepsilon}
\providecommand{\kwds}[1]
{
  \small	
  \textbf{\textit{Keywords---}} #1
}
\newtheorem{theorem}{Theorem}
\theoremstyle{remark}
\newtheorem{remark}[theorem]{Remark}
\begin{document}
%
%
%
\title[Energy asymptotics]%
{Energy asymptotics for the strongly damped Klein-Gordon equation}

\author{Haidar Mohamad}
\email{Alexandre.Monnet@ku.de}
\address{Katholische Universit\"{a}t Eichst\"{a}tt-Ingolstadt\\
Auf der Schanz 49\\
85049 Ingolstadt\\
Germany}
 
\maketitle
\begin{abstract}
We consider the strongly damped Klein Gordon equation for defocusing nonlinearity and we study the asymptotic 
behaviour of the energy for periodic solutions.  We prove  first the exponential decay to zero for zero mean solutions. 
Then,  we characterize the limit of the energy,  when the time tends to infinity,  for  solutions with small enough initial data and we finally 
prove that such limit is not necessary zero. 
\end{abstract}
\kwds{Klein-Gordon equation,  Strong and weak damping,   Energy decay.}
 \section{Introduction}
    We consider the damped  nonlinear Klein-Gordon equation 
    \begin{equation}\label{MainEq}
    \partial_t^2 \psi +L  \partial_t\psi - \Delta \psi + \psi + |\psi|^p \psi =0 ,  \,\, \psi(0) = \psi_0,\,\, \partial_t\psi(0) = \psi_1,
    \end{equation}
    where $\psi:\CC( \R^+, X),$  $X$ is a Banach subspace in $L^2(\Omega\subset \R^d )$ and  $L: D(L)\subset L^2(\Omega) \to L^2(\Omega)$ is some non-negative operator. Damped semi-linear wave equations particularly equation \eqref{MainEq} have gained 
    a lot of attention numerically and analytically. The related literature is extensive. See for example \cite{HaPark:2011:GblExUnifDecKG, Xu:2010:GblExBlwUpKG, XuDing:2013:GblSolFtTimeKG, GaoGuo:2004:TimePer2DKG,   LinCui:2008:NewMthdKG, DiazPuri:2005:NumericDKG3}.  
  The associated energy $E \in \CC(X, \R^+) $ is given by 
  \begin{equation}
  E(\psi) = \frac 12 \int_{\Omega} |\partial_t\psi|^2 + |\psi|^2 + |\nabla \psi|^2 +\frac{1}{p+2} \int_{\Omega} |\psi|^{p+2}.
  \end{equation}
  The linear part of the system is dissipative in the sense that the linear semigroup action loses energy. Indeed, we have atleast  formally
  \begin{equation}\label{EnrDec}
  \frac{\d}{\d t} E(\psi(t)) =- \int_{\Omega} |\sqrt{L}\,\partial_t\psi|^2 \leq 0.
  \end{equation}
  In general,  damping can be {\it weak} when the semigroup generated by the linear part of the equation is merely continuous (example $L = \gamma(x) I$ with $\gamma(x) \geq 0$ ), 
  and {\it strong} when the semigroup is compact (example $L = -\Delta + \gamma(x) I$ with $\gamma(x) \geq 0$ ).  
  It has been proven that when $L = \gamma(x)$ is a "positive  multiplicator", namely, 
  \begin{equation}
  \forall  x \in \omega \subset \Omega,  \gamma(x) \geq \alpha > 0 
  \end{equation}
  for some open set $\omega$ and  a positive $\alpha,$ the energy $E$ decays exponentially to zero.
  We refer to \cite{AlouiIbrahimNakanishi:2010:ExpDecayDKG1,  DehmanLebeauZuazua:2003:StabCrlSWE,
  JolyLaurent:2013:StabSWE} for related results.   Another related results  of polynomially and exponentially decays a for slightly different types of damping   are proved by J. Royer \cite{Royer:2018:EnDecayDKG2} for the linear Klein-Gordon equation  and by \cite{Nakao:2012:EnDecKGlocDisp} for the nonlinear equation.

 The energy asymptotics for the strongly damped equation  hasn't gained enough attention. 
 R.  Xu and W.  Lian \cite{LianXu:2020:WeakStrongLogarithmicNL}  have recently considered the following  equation 
 $$ \partial_t^2 \psi - \omega \Delta \partial_t\psi  + \mu \psi - \Delta \psi + \psi + f(\psi) =0,  \omega \geq 0$$
 with logarithmic nonlinearity  given by $f(\psi) = -\ln(|\psi|)\psi$  and they proved,  under the strong assumption $\mu > \omega \lambda_1,$ that the related energy 
 decays exponentially to zero for three  different initial energy levels determined by the minima of the potential energy on the so-called  Nehari manifold
 \cite{Nehari:1960:NonLin2ndODE, Nehari:1960:CharVal2ndDE}.    $\lambda_1$ is the first eigenvalue of the operator$-\Delta$ under the homogeneous Dirichlet boundary conditions.  
 Considering a functional setting similar  to that used in \cite{LianXu:2020:WeakStrongLogarithmicNL} ,  S.M.S.  Cordeiro {\em et  al.} \cite{CordeiroPereira:2021:ExpDecKGKirchhoff} proved the exponential decay to zero  of the energy related to the 
strongly damped ($L = -\Delta$) Klein-Gordon equation of Kirchhoff-Carrier  type 
 $$\partial_t^2 \psi -\Delta \partial_t\psi - M\left( \| \nabla \psi \|\right)\Delta \psi +M_1\left( \|  \psi \|\right)\ \psi -\ln( |\psi|^2) \psi =0 ,$$
where $M, M_1$ are two continuous  non-negative  functions defined on $[0, +\infty).$
However,  depending on the initial data,  the energy  in the strongly damped case ($L = -\Delta$) might
 decay to some conserved  quantity different from zero.  For example,   the initially non-zero-average periodic solutions defined on the $d-$torus $\Omega =\T^d$ have such quantity.  This observation has not been studied so far and will be the focus of this paper.
 To clarify our  purpose,  let's consider first the linear equation defined by the linear part of \eqref{MainEq} and denote 
 $$\theta(t) := \fint  \psi(t) := \frac{1}{(2 \pi)^d}\int_{\T^d} \psi(t, x)\d x$$
 which corresponds to the time-dependent zero Fourier coefficient of $\psi(t, \cdot).$
 Thus, the zero average function $\phi := \psi - \theta$ satisfies the linear system 
 \begin{equation}
  \partial_t^2 \phi-\Delta \partial_t\phi - \Delta \phi + \phi  =0,
 \end{equation}
 which retrieves the exponential  decay  of $E(\phi)$ to zero.
  Moreover,  $E(\psi)$ decays to a conserved positive quantity; more specifically 
  
  $$E(\psi) = E(\phi) + \frac{(2 \pi)^d}{2}( |\theta|^2 + |\theta'|^2)$$
  and 
  $$ |\theta(t)|^2 + |\theta'(t)|^2= |\theta(0)|^2 + |\theta'(0)|^2.$$
  Indeed, $\theta$ satisfies the differential equation 
  $$\theta'' + \theta = 0.$$
 In the spectral level,  the damping effect of  $L= -\Delta$  is caused by  its  non-zero eigenvalues,  whereas $\theta$ oscillates independently, so that quantity $ |\theta(t)|^2 + |\theta'(t)|^2$ remains conserved.  This separation is however not clear in the case of 
 the nonlinear equation.   The rest of the paper is organised as follows: We prove first that the Cauchy problem of \eqref{MainEq} ($L = -\Delta$) is globally well posed 
 on $H^2(\T^d).$ Then,  we  prove that the  energy of zero-average solutions decays exponentially to zero.   Finally,  we study the energy asymptotics for small initial data solutions. 
 \subsection*{Useful notations}
 \begin{itemize}
 \item The norm $\|\cdot\|$ refers to the $L^2$ norm. 
 \item The constant $C$ changes in the estimates from line to line unless otherwise noted.
 \end{itemize}

\section{The Cauchy problem}
In this section we study the Cauchy problem for \eqref{MainEq} on $\CC(\R^+,  H^2(\T^d)).$ 

We have 
\begin{theorem}\label{CauchyPbThm}
Assume that  $d\leq 3,$  $p\geq 0$ and  $\psi_0, \psi_1 \in H^2(\T^d).$
Equation \eqref{MainEq} has a unique global solution $\psi \in\CC^1(\R^+,  H^2(\T^d))$  such that $\psi(0) = \psi_0$ 
and $\partial_t\psi(0) = \psi_1.$  Moreover,  there exists a constant $C >0$
such that 
\begin{equation}\label{H2bound}
\|\psi(t)\|_{H^2} \leq C\left(\|\psi_0\|_{H^2} + \|\psi_0\|^{\frac p2 +1}_{p+2} + \|\psi_1\|  \right) \,\, \forall t \in \R^+. 
\end{equation} 
\end{theorem}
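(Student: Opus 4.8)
The plan is to recast \eqref{MainEq} with $L=-\Delta$ as a first-order system for $U=(\psi,\partial_t\psi)$, namely $\partial_t U=\A U+\mathcal F(U)$ with
$$\A=\begin{pmatrix}0 & I\\ \Delta-I & \Delta\end{pmatrix},\qquad \mathcal F(U)=\begin{pmatrix}0\\ -|\psi|^p\psi\end{pmatrix},$$
and to solve it by combining semigroup smoothing with a fixed-point argument. On $\T^d$ the Fourier symbol of $\A$ at the mode $k$ is the matrix $M_k=\left(\begin{smallmatrix}0 & 1\\ -(1+|k|^2) & -|k|^2\end{smallmatrix}\right)$, whose eigenvalues $\lambda_\pm(k)$ have $\Re\lambda_\pm<0$ for $k\neq0$, with $\lambda_-(k)\sim-|k|^2$ responsible for parabolic smoothing and $\lambda_+(k)\to-1$, while $\lambda_\pm(0)=\pm\i$. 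Hence $\A$ generates a bounded analytic semigroup $S(t)=(\e^{tM_k})_{k}$ that decays exponentially on the nonzero modes and leaves the zero mode $\theta=\fint\psi$ undamped, exactly as in the linear discussion of the Introduction.

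First I would prove local well-posedness in the phase space $Y=H^2(\T^d)\times H^2(\T^d)$. The decisive structural fact, and the reason for the hypotheses $d\le3$ and $H^2$, is the embedding $H^2(\T^d)\hookrightarrow L^\infty(\T^d)$ together with the algebra property of $H^2$: for every $p\ge0$ these make $\psi\mapsto|\psi|^p\psi$ locally Lipschitz from $H^2$ into $H^1$. The missing derivative in the velocity slot is then recovered from the analyticity of $S(t)$, which gains one space derivative at the integrable cost $t^{-1/2}$ on the nonzero modes. I would therefore set up the Duhamel map $U(t)=S(t)U_0+\int_0^t S(t-s)\mathcal F(U(s))\,\d s$ and show it is a contraction on a small ball of $\CC([0,T],Y)$; this yields a unique local solution with $\psi\in\CC^1([0,T],H^2)$. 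The same smoothing explains why only $\|\psi_1\|$, and not $\|\psi_1\|_{H^2}$, will enter the final bound.

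Next I would pass to a global solution and establish \eqref{H2bound}. The first ingredient is the energy identity \eqref{EnrDec}: since $E$ is non-increasing and the nonlinearity is defocusing, one gets the uniform controls $\|\psi(t)\|_{H^1}$, $\|\partial_t\psi(t)\|$, $\|\psi(t)\|_{p+2}\le C\,E(\psi_0)^{1/2}$ together with the dissipation budget $\int_0^\infty\|\nabla\partial_t\psi\|^2\,\d s\le E(\psi_0)$. The second ingredient is a second-order estimate: testing the equation with $-\Delta\partial_t\psi$ produces
$$\tfrac12\tfrac{\d}{\d t}\bigl(\|\nabla\partial_t\psi\|^2+\|\Delta\psi\|^2+\|\nabla\psi\|^2\bigr)+\|\Delta\partial_t\psi\|^2=-\int_{\T^d}\nabla\bigl(|\psi|^p\psi\bigr)\cdot\nabla\partial_t\psi,$$
whose dissipation $\|\Delta\partial_t\psi\|^2$, combined with the first-order bounds and the good sign of the defocusing term, should be used to derive a time-uniform bound of the shape $\|\psi(t)\|_{H^2}\le C\bigl(\|\psi_0\|_{H^2}+E(\psi_0)^{1/2}\bigr)$. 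Expanding $E(\psi_0)^{1/2}$ into $\|\psi_1\|$, $\|\psi_0\|_{H^1}\le\|\psi_0\|_{H^2}$ and $\|\psi_0\|_{p+2}^{\frac p2+1}$ yields exactly \eqref{H2bound}, while the finiteness of this bound on every interval rules out blow-up and upgrades the local solution to a global one. Throughout, the undamped zero mode $\theta$ must be separated out and bounded directly through the energy, as in the Introduction.

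The hard part will be closing the second-order estimate for all $p\ge0$ uniformly in time. Bounding $|\nabla(|\psi|^p\psi)|$ by $\|\psi\|_{L^\infty}^p|\nabla\psi|$ and controlling $\|\psi\|_{L^\infty}$ through $\|\Delta\psi\|$ introduces a superlinear term of order $\|\Delta\psi\|^{p}$ in the differential inequality, which the energy dissipation alone cannot absorb and which a priori allows finite-time blow-up once $p>1$. This is precisely where the \emph{strong} damping is essential: the parabolic smoothing and exponential decay carried by the analytic semigroup $S(t)$ must be exploited through the Duhamel formula, and interpolated against the extra $L^{p+2}$ control furnished by the defocusing sign, so as to lower the effective power of $\|\Delta\psi\|$ below one and make the nonlinear feedback summable in time. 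Achieving this reduction uniformly in $t$, and with a constant depending on the data only through the combination appearing in \eqref{H2bound}, is the crux of the argument.
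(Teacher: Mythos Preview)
Your local theory is fine, though you work harder than necessary: for $d\le3$ the map $\psi\mapsto|\psi|^p\psi$ already sends $H^2$ into $H^2$ and is locally Lipschitz there (using $H^2\hookrightarrow L^\infty$ and $\nabla\psi\in H^1\hookrightarrow L^4$), so the paper runs the fixed point directly on $H^2\times H^2$ without invoking analytic smoothing.

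The genuine gap is in your global step. Testing the equation with $-\Delta\partial_t\bar\psi$ produces the term $-\Re\int\nabla(|\psi|^p\psi)\cdot\nabla\partial_t\bar\psi$, and this has \emph{no sign}: the defocusing structure does not help here, because the time derivative destroys the gradient structure of the nonlinearity. You recognise this and propose to close via Duhamel and parabolic smoothing, but that plan is not substantiated, and a superlinear feedback of order $\|\Delta\psi\|^p$ is precisely the kind of term such arguments do not absorb uniformly in time for arbitrary $p$. As written, the argument does not close.

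The paper avoids this entirely by testing with $\Delta\bar\psi$ rather than $-\Delta\partial_t\bar\psi$. Concretely, it introduces
\[
J(\psi)=\tfrac12\|\Delta\psi-\partial_t\psi\|^2+\tfrac12\|\psi\|_{H^1}^2+\tfrac1{p+2}\|\psi\|_{p+2}^{p+2}
      =E(\psi)+\tfrac12\|\Delta\psi\|^2-\Re\!\int\partial_t\psi\,\Delta\bar\psi,
\]
and computes $\frac{\d}{\d t}J(\psi)=-\|\Delta\psi\|^2-\|\nabla\psi\|^2+\Re\int|\psi|^p\psi\,\Delta\bar\psi$. After one integration by parts the nonlinear contribution becomes $-\int|\psi|^p|\nabla\psi|^2-p\int|\psi|^{p-2}|\Re(\psi\nabla\bar\psi)|^2\le0$, so $J$ is monotone decreasing. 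Since $J$ controls $\|\Delta\psi\|$ (via $\|\Delta\psi\|\le\|\Delta\psi-\partial_t\psi\|+\|\partial_t\psi\|$ and the energy bound on $\|\partial_t\psi\|$), this gives the uniform $H^2$ bound immediately, with $J(\psi_0)$ producing exactly the combination $\|\psi_0\|_{H^2}+\|\psi_0\|_{p+2}^{p/2+1}+\|\psi_1\|$ in \eqref{H2bound}. The missing idea in your proposal is this choice of multiplier: pairing with $\Delta\psi$ rather than $\Delta\partial_t\psi$ is what converts the defocusing sign into a usable monotone Lyapunov functional at the $H^2$ level.
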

\begin{proof}
Written as a first order system,  equation \eqref{MainEq}  takes the abstract form 
\begin{equation}\label{1stOrdMainEq}
\partial_t \Psi + \mathcal{A} \Psi + F(\Psi)=0
\end{equation}
with
$$ \Psi  := \left( \begin{matrix} \psi \\ \partial_t \psi  \end{matrix}\right),\,\, 
\mathcal{A} := \left( \begin{matrix} 0 & -1\\ -\Delta +1 & - \Delta \end{matrix}\right),\,\,  
\text{and}\,\,  F(\Psi) :=   \left( \begin{matrix} 0 \\  |\psi|^p \psi\end{matrix}\right).$$
We cast \eqref{1stOrdMainEq} in its mild formulation 
\begin{equation}\label{MildFormMainEq}
\Psi(t) = \e^{- t\A} \Psi(0) - \int_0^t \e^{(\tau - t) \A} F(\Psi(\tau)) \d \tau,
\end{equation}
where 
$$\e^{- t\A} = \e^{\frac 12 t \Delta} \left( \begin{matrix} \cosh(t A) - \frac 12 \Delta A^{-1} \sinh(t A) & A^{-1}  \sinh(t A) \\  (\Delta -1) A^{-1}  \sinh(t A) &\cosh(t A) + \frac 12  \Delta A^{-1} \sinh(t A) \end{matrix}\right)$$
with 
$$A:= \frac 12 \sqrt{ \Delta^2 + 4 \Delta -4 } .$$
Taking into account that $d\leq 3,$  it is classical that the map $$\Psi \mapsto F(\Psi)$$ leaves $X_2 := H^2\times H^2$ invariant,
and is Lipschitz continuous on the bounded subsets of  $X_2.$  Consequently,  \eqref{MildFormMainEq} has a maximal solution 
$\Psi \in \CC([0,  T^\ast),  X_2),$  which blows up as $t$ approaches $T^\ast$ if $T^\ast <\infty.$ We prove now that $T^\ast = \infty.$
Denote $$J(\psi):= \frac 12\|\Delta \psi - \partial_t \psi\|^2 + \frac 12 \|\psi\|^2_{H^1} + \frac{1}{p+2}\|\psi\|_{p+2}^{p+2}.$$
It is clear that $$J(\psi) = E(\psi) +  \frac 12\|\Delta \psi\|^2 - {\rm Re}\int_{\T^d} \partial_t \psi \Delta \bar \psi.$$
Differentiating $J$ with respect to time for $t<T^\ast$ and using \eqref{EnrDec},  we find that 
\begin{eqnarray}
\frac{\d}{\d t} J(\psi) &=& \frac{\d}{\d t} E(\psi) +{\rm Re}\int_{\T^d} \partial_t \Delta \psi \Delta \bar \psi 
- \partial^2_t \psi \Delta \bar \psi - \partial_t \psi \Delta  \partial_t\bar \psi\notag \\
&=& {\rm Re}\int_{\T^d} (-\Delta\psi + \psi + |\psi|^p\psi )  \Delta \bar \psi\notag \\
&=& - \|\Delta \psi\|^2 -  \|\nabla \psi\|^2- \int_{\T^d} |\psi|^p  |\nabla \psi|^2 +p |\psi|^{p-2}  |{\rm Re} (\psi\nabla \bar \psi)|^2\notag
\end{eqnarray}
which means that $t\mapsto J(\psi(t))$ is decreasing and  we have 
$$J(\psi(t)) \leq J(\psi(0))  \,\, \forall t\in [0 , T^\ast).$$
Consequently,  there exists a constant $C>0$ depending on $\psi_0$ and $\psi_1$ which can be expressed in the form of the r.h.  side of \eqref{H2bound} such that 
\begin{equation}\label{psiBndH2}
\|\psi(t)\|_{H^2} \leq C \,\, \forall t\in [0 , T^\ast).
\end{equation}
Using \eqref{MildFormMainEq} together with \eqref{psiBndH2},  we find that there exists a   constant $C_1>0$ depending on $\psi_0$ and $\psi_1$ such that for any $0< T<T^\ast,$ we have 
$$\|\Psi(t)\|_{L^\infty([0, T],X_2)} \leq  \|\Psi(0)\|_{X_2} + C_1 T,$$
which implies that $\Psi$ can be extended as a global solution in $\CC(\R^+,  X_2).$ 
\end{proof}

\begin{remark}\label{RemH1Sol}
Giving more restrictions on the nonlinearity,  a global well posedness result for  the Cauchy problem of \eqref{MainEq} could be established on a 
larger spacial domain.  If, for example,  $\psi_0, \psi_1 \in H^1(\T^d)$ and 
\[
0<  p  
\begin{cases}
\leq \frac{4}{d-2}  & \text{if $d\geq 3$,} \\
 < \infty & \text{if $d = 1,  2,$} 
\end{cases}
\]
then \eqref{MainEq}  has a  unique global  solution 
$$\psi \in \CC(\R^+ , H^1(\T^d))\cap \CC^1(\R^+, L^2(\T^d))\cap \CC^2(\R^+ , H^{-1}(\T^d))$$
with $\partial_t \psi \in L^2_{{\rm loc}}(\R^+,  H^1(\T^d)).$   The proof for such result follows the same steps of Theorem 3.1 in  \cite{GazzolaSquassina:2006:GlobalSolDmpdSWE} 
for the the existence of a unique maximal solution. The fact that the energy is decreasing implies the boundness  of  $t\mapsto \|\psi(t)\|_{H^1}$ which in terns imply that the solution is global. 
However,  the spacial regularity given by Theorem \ref{CauchyPbThm} is needed  in the main result presented in Theorem \ref{ThmDec}.
\end{remark}
\section{Asymptotic behaviour of energy}

 We consider now the energy decay for zero mean solutions.
\begin{theorem}
Let $\psi$ denote a solution of \eqref{MainEq} in the same functional settings mentioned in Remark \ref{RemH1Sol}.  Assume further that 
\begin{equation}
  \fint \psi(t) = 0, \,\, \forall t \in \R^+.
\end{equation}
Then,  there exists $C,  \alpha >0$ such that 
\begin{equation}
E(\psi(t)) \leq C {\rm e}^{-\alpha t}  , \,\, \forall t \in \R^+.
\end{equation}
\end{theorem}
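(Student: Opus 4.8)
The plan is to build a Lyapunov functional comparable to the energy $E$ and to establish a differential inequality of the form $\mathcal{L}'\le -\alpha\,\mathcal{L}$, from which exponential decay follows by Gr\"onwall. The decisive structural fact is that the zero-mean constraint propagates to the velocity: differentiating $\fint\psi(t)=0$ in time yields $\fint\partial_t\psi(t)=0$. Hence both $\psi(t)$ and $\partial_t\psi(t)$ have vanishing zero Fourier mode on $\T^d$, so the Poincar\'e inequality applies. Writing $\lambda_1$ for the first nonzero eigenvalue of $-\Delta$ on $\T^d$ (equal to $1$ for the standard torus), one has $\|\nabla u\|^2\ge \lambda_1\|u\|^2$ for every zero-mean $u$. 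Combined with the dissipation identity furnished by \eqref{EnrDec} with $L=-\Delta$, namely $\frac{\d}{\d t}E(\psi)=-\|\nabla\partial_t\psi\|^2$, this lets the strong damping control the full $L^2$ norm of $\partial_t\psi$ --- precisely the mechanism that fails for data of non-zero average, as noted in the introduction.

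To exploit the dissipation I would test \eqref{MainEq} against $\bar\psi$ and consider the multiplier functional $\Phi(t):={\rm Re}\int_{\T^d}\partial_t\psi\,\bar\psi + \tfrac12\|\nabla\psi\|^2$, the gradient term being added to absorb the exact time-derivative $\tfrac12\frac{\d}{\d t}\|\nabla\psi\|^2$ produced by the term $-\Delta\partial_t\psi$. A direct computation using the equation, integration by parts, and $\int_{\T^d}|\psi|^p\psi\,\bar\psi = \|\psi\|_{p+2}^{p+2}$ gives $\Phi'(t)=\|\partial_t\psi\|^2-\|\nabla\psi\|^2-\|\psi\|^2-\|\psi\|_{p+2}^{p+2}$, in which the defocusing nonlinearity enters with a favourable sign. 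These manipulations are only formal at the regularity of Remark \ref{RemH1Sol}, but they are legitimized by a standard regularization/density argument, the pairing $\langle\partial_t^2\psi,\bar\psi\rangle$ making sense in the $H^{-1}$--$H^1$ duality, with $\partial_t\psi\in H^1$ for a.e.\ $t$.

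I would then set $\mathcal{L}(t):=E(\psi(t))+\eps\,\Phi(t)$ for a small parameter $\eps>0$. Since $|{\rm Re}\int_{\T^d}\partial_t\psi\,\bar\psi|\le\tfrac12(\|\partial_t\psi\|^2+\|\psi\|^2)$ and $\tfrac12\|\nabla\psi\|^2\le E(\psi)$, one obtains $|\Phi(t)|\le C\,E(\psi(t))$, so $\mathcal{L}$ and $E$ are equivalent as soon as $\eps C<1$. Differentiating and inserting the Poincar\'e bound $-\|\nabla\partial_t\psi\|^2\le-\lambda_1\|\partial_t\psi\|^2$ yields $\mathcal{L}'(t)\le-(\lambda_1-\eps)\|\partial_t\psi\|^2-\eps\|\nabla\psi\|^2-\eps\|\psi\|^2-\eps\|\psi\|_{p+2}^{p+2}$. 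Choosing $\eps<\lambda_1$ makes every coefficient negative, whence $\mathcal{L}'(t)\le-c\,E(\psi(t))\le-\alpha\,\mathcal{L}(t)$ for suitable $c,\alpha>0$; Gr\"onwall gives $\mathcal{L}(t)\le\mathcal{L}(0)\e^{-\alpha t}$, and the equivalence of $\mathcal{L}$ with $E$ delivers the stated estimate. I expect the main obstacle to be conceptual rather than computational: recognizing that the zero-mean hypothesis is exactly what renders the gradient damping coercive on $\partial_t\psi$ via Poincar\'e, and pinning down the correct corrector $\Phi$; the remaining bounds are routine, up to the density argument needed to justify the energy identities at the regularity of Remark \ref{RemH1Sol}.
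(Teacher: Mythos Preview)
Your argument is correct and follows essentially the same Lyapunov--Gronwall strategy as the paper: perturb the energy by a multiple of ${\rm Re}\int\partial_t\psi\,\bar\psi$, use the zero-mean hypothesis to apply Poincar\'e to $\partial_t\psi$, and close a differential inequality. The only (cosmetic) difference is that you add $\tfrac12\|\nabla\psi\|^2$ to your corrector $\Phi$ so that the cross term from $-\Delta\partial_t\psi$ disappears exactly, whereas the paper keeps the simpler corrector $E_\eps=E+\eps\,{\rm Re}\int\bar\psi\,\partial_t\psi$ and absorbs the resulting cross term $-\eps\,{\rm Re}\int\nabla\bar\psi\cdot\nabla\partial_t\psi$ via Young's inequality.
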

\begin{proof}
The proof uses the same technique used to prove Proposition 2.6 in  \cite{JolyLaurent:2013:StabSWE} together with the Poincar\'{e}
inequality 
$$\|\partial_t\psi \|=\left\|\partial_t\left(\psi - \fint \psi\right)\right\| \leq C \|\nabla \partial_t \psi\|$$
for some constant $C>0.$ We introduce the modified energy
$$E_\eps(\psi) : = E(\psi) +\eps  \int_{\T^d} {\rm Re}(\bar \psi\partial_t \psi )$$
with $\eps>0.$ For $\eps$ small enough, there exist two constants $C_1, C_2 >0$ depending on $\eps$ such that 
$$C_1(\|\psi\|^2 + \|\partial_t \psi\|^2) \leq  \frac 12(\|\psi\|^2 + \|\partial_t \psi\|^2) +\eps  \int_{\T^d} {\rm Re}(\bar \psi\partial_t \psi ) \leq C_2(\|\psi\|^2 + \|\partial_t \psi\|^2),$$
which means that $E_\eps$ is equivalent to $E$ for small enough $\eps$ and it is sufficient to prove the exponential decay for $E_\eps.$
We have 
\begin{eqnarray}
\frac{\d }{\d  t}E_\eps(\psi(t)) &=& \frac{\d }{\d  t}E(\psi(t)) + \eps \|\partial_t \psi\|^2 + \eps  \int_{\T^d} {\rm Re}(\bar \psi\partial^2_t \psi )\notag\\
&=& - \|\nabla \partial_t \psi\|^2 + \eps \|\partial_t \psi\|^2 -\eps \| \psi\|^2 -\eps \|\nabla \psi\|^2 \notag\\
&&  -\eps \|\psi\|_{p+2}^{p+2} -  \eps  \int_{\T^d} {\rm Re}(\nabla\bar \psi\nabla \partial_t \psi )\notag\\
&\leq& -(C^{-1}-\eps -\frac{\eps}{2} C^{-1}) \|\partial_t \psi\|^2-\eps \| \psi\|^2  -\frac{\eps}{2} \|\nabla \psi\|^2  -\eps \|\psi\|_{p+2}^{p+2}\notag\\
&\leq & - \beta E_\eps(\psi(t))\notag
\end{eqnarray}
with $\beta = \beta(\eps) >0$ which implies the exponential decay of $E_\eps$ and thus of $E.$
\end{proof} 

We study now the energy decay for solutions with small initial data.  Namely, 

\begin{theorem}\label{ThmDec}
Let $\psi_0,  \psi_1 \in H^2(\T^d).$ 
Denote $\psi$ the solution of  \eqref{MainEq} given by Theorem  \ref{CauchyPbThm} such that 
$$\psi(0) = \psi_0,\, \,  \partial_t\psi(0)=  \psi_1.$$
 Denote further 
 $$\theta := \fint\psi, \,\,  \phi := \psi - \theta\,\,\, \text{and}$$
 $$ Q(\theta) := (2 \pi)^d\, \left(\frac 12 |\theta|^2 + \frac 12 |\theta'|^2 + \frac{1}{p+2}|\theta|^{p+2} \right).$$
 Then,  if $\|\psi_0\|_{H^2}, \|\psi_1\|_{H^2}$ are small enough, there exist $\tilde C,  \beta, \alpha> 0$  depending on  $\psi_0$ and $\psi_1$ 
 and  there exists $C>0$  such that 
 \begin{equation}\label{H2Decay}
 \|\phi\|_{H^2} + \|\partial_t\phi\|_{H^2} \leq C ( \|\phi(0)\|_{H^2} + \|\partial_t\phi(0)\|_{H^2})\e^{-\beta t}.
 \end{equation}
 Moreover, the limit $\lim_{t\to \infty}Q(\theta(t))$ exists and we have
 \begin{equation}\label{ElimQDecay}
 |E(\psi(t)) - Q(\theta(t))| \leq \tilde C\, {\rm e}^{-\alpha\, t}.
 \end{equation}
\end{theorem}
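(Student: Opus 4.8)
The plan is to split the solution into its spatial mean and mean-zero part, obtain exponential decay of the latter in $H^2$, and then read off the behaviour of the mean from a perturbed finite-dimensional oscillator. Averaging \eqref{MainEq} over $\T^d$ and using that $\fint\Delta\psi=\fint\Delta\partial_t\psi=0$ on the torus, one finds that $\theta$ solves
\[
\theta'' + \theta + \fint\big(|\psi|^p\psi\big)=0,
\]
while subtracting this from \eqref{MainEq} shows that $\phi=\psi-\theta$ solves the strongly damped Klein--Gordon equation with a mean-zero nonlinear forcing,
\[
\partial_t^2\phi-\Delta\partial_t\phi-\Delta\phi+\phi = -\Big(|\psi|^p\psi-\fint|\psi|^p\psi\Big)=:g.
\]
Since Theorem \ref{CauchyPbThm} provides a uniform bound $\|\psi(t)\|_{H^2}\le C$ controlled by the (small) data, the embedding $H^2(\T^d)\hookrightarrow L^\infty$ for $d\le 3$ gives $\sup_t|\theta(t)|\le C\,\sup_t\|\psi(t)\|_{H^2}$ small; this smallness is what makes the $\theta$--$\phi$ coupling a genuine perturbation.

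The core step is the decay \eqref{H2Decay} of $\phi$. I would run the modified-energy argument of the preceding theorem on zero-average solutions, but at the $H^2$ level: applying $-\Delta$ to the $\phi$-equation (using that $\Delta$ preserves the mean-zero subspace) gives the same equation for $\Delta\phi$ with forcing $\Delta g$, so I would use a Lyapunov functional $\mathcal E_\eps(\phi)=E_2(\phi)+\eps\,{\rm Re}\int_{\T^d}(\bar\phi\,\partial_t\phi+\Delta\bar\phi\,\Delta\partial_t\phi)$, where $E_2$ is the full $H^2$ energy of $\phi$. The dissipation $-\|\nabla\partial_t\phi\|^2-\|\nabla\Delta\partial_t\phi\|^2$ together with the Poincar\'e inequality on mean-zero functions (the spectral gap of $-\Delta$ off the constant mode) yields, for $\eps$ small, a differential inequality $\frac{\d}{\d t}\mathcal E_\eps\le -\beta\mathcal E_\eps$ plus a forcing contribution. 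The forcing obeys the pointwise bound $|g|\le C(|\theta|^p+|\phi|^p)|\phi|$, and at the $H^2$ level its derivatives are handled by the $H^2$ algebra property and $H^2\hookrightarrow L^\infty$; because $|\theta|$ is small and $\psi$ is uniformly bounded in $H^2$, these terms are bounded by a small multiple of $\mathcal E_\eps$ plus higher powers and are absorbed into $-\beta\mathcal E_\eps$. A continuity (bootstrap) argument on a maximal interval where $\mathcal E_\eps(\phi)$ stays below a small threshold then closes the estimate and gives \eqref{H2Decay}.

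Granting \eqref{H2Decay}, the two remaining claims are essentially algebraic. Expanding the energy with $\psi=\theta+\phi$, using $\nabla\theta=0$ and $\int_{\T^d}\phi=\int_{\T^d}\partial_t\phi=0$, one obtains
\[
E(\psi)-Q(\theta)=\tfrac12\big(\|\partial_t\phi\|^2+\|\phi\|^2+\|\nabla\phi\|^2\big)+\tfrac{1}{p+2}\int_{\T^d}\big(|\theta+\phi|^{p+2}-|\theta|^{p+2}\big),
\]
and since $\big||\theta+\phi|^{p+2}-|\theta|^{p+2}\big|\le C(|\theta|^{p+1}|\phi|+|\phi|^{p+2})$, every term on the right is controlled by $\|\phi\|_{H^2}$ (via $H^2\hookrightarrow L^\infty\cap L^{p+2}$). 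By \eqref{H2Decay} this proves \eqref{ElimQDecay} for a suitable $\alpha>0$. For $\lim_{t\to\infty}Q(\theta(t))$ I would either observe that $E(\psi(t))$ is non-increasing by \eqref{EnrDec} and bounded below, hence convergent, so \eqref{ElimQDecay} forces $Q(\theta(t))$ to the same limit; or, more quantitatively, write $\theta''=-\theta-|\theta|^p\theta+h$ with $h:=|\theta|^p\theta-\fint|\psi|^p\psi$ exponentially small by \eqref{H2Decay}, which after the cancellations gives the clean identity $\frac{\d}{\d t}Q(\theta)=(2\pi)^d{\rm Re}(\bar\theta'h)$, whose right-hand side is integrable and yields both convergence and the rate.

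I expect the main obstacle to be the decay \eqref{H2Decay} itself: one must control the nonlinearity at the full $H^2$ level while keeping the $\theta$--$\phi$ coupling subordinate to the spectral gap, which is possible only because of the smallness of the data. The delicate point is to arrange the bootstrap so that the threshold for $\mathcal E_\eps(\phi)$ and the smallness of $\theta$ are chosen compatibly, so that the perturbative forcing terms are genuinely absorbed by the linear dissipation.
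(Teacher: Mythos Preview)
Your proposal is correct, and for the parts concerning $|E(\psi)-Q(\theta)|$ and the convergence of $Q(\theta(t))$ it coincides with the paper's argument. The genuine difference is in how you obtain the decay \eqref{H2Decay}. The paper does not run an $H^2$ Lyapunov functional; instead it writes the $\phi$--equation in mild (Duhamel) form $\Phi(t)=\e^{-t\mathcal A}\Phi(0)-\int_0^t \e^{(\tau-t)\mathcal A}G(\psi(\tau))\,\d\tau$, uses the explicit semigroup estimate $\|\e^{-t\mathcal A}\Upsilon\|_{X_2}\le C\e^{-t/2}\|\Upsilon\|_{X_2}$ on mean-zero data, bounds the forcing by $\|G(\psi)\|_{X_2}\le C\|\psi\|_{H^2}^p\|\phi\|_{H^2}$ (via Poincar\'e and $\nabla\psi=\nabla\phi$), and then applies Gronwall. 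Because the uniform $H^2$ bound of Theorem~\ref{CauchyPbThm} already makes $\|\psi(t)\|_{H^2}^p$ small for all $t$, no bootstrap is needed --- this is also available to you and would let you drop the continuity argument in your scheme. Your Lyapunov route has the advantage of not invoking the explicit semigroup formula; the paper's route is shorter and avoids differentiating the nonlinearity inside an energy identity. One small caution: your pointwise bound $|g|\le C(|\theta|^p+|\phi|^p)|\phi|$ is not literally correct since $g$ contains a constant mean correction, but the estimate you actually need on $\|g\|_{H^2}$ follows cleanly from Poincar\'e through $\nabla g=\nabla(|\psi|^p\psi)$ and $\nabla\psi=\nabla\phi$, which is exactly how the paper proceeds.
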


\begin{proof}
Denote $\phi = \psi - \theta$ and $f(z) = |z|^p z.$ Thus,  $(\phi, \theta)$ satisfies  the system 
\begin{subequations}
\begin{gather}
 \partial_t^2 \phi - \Delta \partial_t\phi - \Delta \phi + \phi + f(\psi) -  \fint f(\psi) =0  \,,
 \label{MainEqphi.a} \\
 \theta '' + \theta  + \fint f(\psi)= 0 \label{MainEqphi.b} \,.
\end{gather}
\end{subequations}
Denote 
$$ \Phi := \left( \begin{matrix} \phi\\\partial_t \phi    \end{matrix}\right)\,\, \text{and}\,\,
G(\psi) :=  \left( \begin{matrix} 0 \\  f(\psi) -  \fint f(\psi)    \end{matrix}\right).$$
Thus,  as in \eqref{MildFormMainEq},  $\Phi$ satisfies the mild equation 
\begin{equation}\label{MildFormPhi}
\Phi(t) = \e^{- t\A} \Phi(0) - \int_0^t \e^{(\tau - t) \A} G(\psi(\tau)) \d \tau.
\end{equation}
Using the Pioncar\'{e} inequality,  there is a constant $C>0$ such that 
\begin{equation}\label{EstimG}
\|G(\psi)\|_{H^2} \leq C \|\nabla f(\psi)\|_{H^1}.
\end{equation}
Moreover, we have 
$$|\nabla f(\psi)| \leq (p+1)|\psi|^p |\nabla \psi| = (p+1)|\psi|^p |\nabla \phi|, $$
and  
$$|\Delta f(\psi)| \leq (p+1)(|\psi|^p |\Delta \psi| +p|\psi|^{p-1} |\nabla \psi|^2)= 
(p+1)(|\psi|^p |\Delta \phi| +p|\psi|^{p-1} |\nabla \phi|^2).$$
Thus,  using the interpolation inequality 
$$\||\nabla \psi|^2\| \leq \sqrt 2\|\nabla \psi\|  \|\Delta \psi\|  $$
together with the Sobolev inequality 
$$\|\psi\|_{L^\infty} \leq C \|\psi\|_{H^2},$$
there exists a constant $C>0$ such that 
\begin{equation}\label{EstimG1}
 \|\nabla f(\psi)\|_{H^1} \leq C  \|\psi\|^p_{H^2}    \|\phi\|_{H^2}.
\end{equation}
For any $\varphi \in H^2(\T^d)$ with $\fint \varphi  = 0,$ we have 
$$\|\e^{\frac 12 t \Delta} \sinh(t A) \varphi\|_{H^2} \leq \e^{-\frac 12 t} \| \varphi\|_{H^2}, \,\,\,  
\|\e^{\frac 12 t \Delta} \cosh(t A) \varphi\|_{H^2} \leq \e^{-\frac 12 t} \| \varphi\|_{H^2},$$
which implies that,  for any $\Upsilon \in X_2$ with $\fint \Upsilon = 0,$ we have 
\begin{equation}\label{DecaySemiGp}
\left\|\e^{-t \A} \Upsilon \right\|_{X_2} \leq C  \e^{-\frac 12 t} \| \Upsilon\|_{X_2}
\end{equation}
for some $C>0.$
Then,  combining \eqref{MildFormPhi},  \eqref{EstimG},  \eqref{EstimG1} and \eqref{DecaySemiGp} together with estimate 
\eqref{H2bound},    we get 
\begin{eqnarray*}
\|\Phi(t)\|_{X_2}& \leq & C_1   \e^{-\frac 12 t} \|\Phi(0)\|_{X^2} + C_2 \int_0^t   \e^{-\frac 12 (t-\tau)}    \|\psi(\tau)\|^p_{H^2}\|\Phi(\tau)\|_{X_2}\d \tau\\
&\leq &   C_1   \e^{-\frac 12 t} \|\Phi(0)\|_{X_2} + C_2 C(\psi_0, \psi_1) \int_0^t   \e^{-\frac 12 (t-\tau)} \|\Phi(\tau)\|_{X_2}\d \tau,\\
\end{eqnarray*}
for some constants $C_1, C_2 >0.$ Since $H^2(\T^d)$ is embedded continuously in $L^{p+2}(\T^d)$ and using \eqref{H2bound},  we find that for small enough $\|\psi_0\|_{H^2}, \|\psi_1\|,$  we have 
$$\beta: = \frac12 - C_2 C(\psi_0, \psi_1) >0,$$
which implies, using Gronwall inequality,  
\begin{equation}\label{H2Decay}
\|\Phi(t)\|_{X^2}  \leq  C_1   \|\Phi(0)\|_{X^2}  \e^{-\beta t}.
\end{equation} 
Since 
$$\left| |\phi + \theta|^{p+2}-|\theta|^{p+2}\right| \leq (p+2)2^p |\phi| (|\phi|^{p+1} + |\theta|^{p+1})$$
and using the continuous  embedding  $H^2(\T^d)\subset L^{p+2}(\T^d) $ together with \eqref{H2Decay},  
we get  
\begin{eqnarray}\label{EQtEstim}
|E(\psi(t)) - Q(t)| &\leq & E(\phi(t)) + \frac{1}{p+2} \left| \|\psi(t)\|_{p+2}^{p+2} - (2\pi)^d |\theta(t)|^{p+2} \right|\notag\\
&\leq& C \e^{-\alpha t}
\end{eqnarray}
for some constants $C,  \alpha >0$ depending on $\|\psi_0\|_{H^2}$ and $\|\psi_1\|.$ Since $t\mapsto E(\psi(t))$ is decreasing (see \eqref{EnrDec}), 
 the limit $\lim_{t\to \infty}  Q(t)$ exists  and \eqref{ElimQDecay} holds.
\end{proof}
We prove in the following theorem that $E(\psi)$ doesn't decay necessarily to zero.
\begin{theorem}\label{ThmNon0Dec}
Assume that $\frac 32\leq p\leq 4$.  Keeping the other settings of Theorem \ref{ThmDec},  there exist $\psi_0,  \psi_1 \in H^2(\T^d)$ for which we have
$$\lim_{t\to \infty} Q(t) >0.$$
\end{theorem}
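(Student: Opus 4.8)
The plan is to exhibit explicit small data for which the limiting energy $Q_\infty:=\lim_{t\to\infty}Q(t)$ is strictly positive, the natural candidates being the spatially homogeneous solutions. First I would note that if $\psi_0,\psi_1$ are constant in $x$, then $\phi(0)=\partial_t\phi(0)=0$, and by the uniqueness in Theorem \ref{CauchyPbThm} the solution remains homogeneous for all time: the ansatz $\psi(t,x)=\theta(t)$ solves \eqref{MainEq} exactly when $\theta$ solves the reduced oscillator
\begin{equation}\label{RedODE}
\theta'' + \theta + |\theta|^{p}\theta = 0 .
\end{equation}
Equation \eqref{RedODE} is globally solvable because the quantity $\frac12|\theta'|^2+\frac12|\theta|^2+\frac1{p+2}|\theta|^{p+2}$ is conserved and keeps $(\theta,\theta')$ bounded; but this quantity is precisely $Q(\theta)/(2\pi)^d$. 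Hence for homogeneous data $Q(t)\equiv Q(0)$, so that
$$Q_\infty=Q(0)=(2\pi)^d\Big(\tfrac12|\theta(0)|^2+\tfrac12|\theta'(0)|^2+\tfrac1{p+2}|\theta(0)|^{p+2}\Big)>0$$
as soon as $(\theta(0),\theta'(0))\neq0$. Taking $|\theta(0)|,|\theta'(0)|$ small enough to meet the smallness hypothesis of Theorem \ref{ThmDec} already establishes the existence claim.

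To show that this positivity persists for genuinely $x$-dependent data---and to see where the range $\tfrac32\le p\le4$ is used---I would next control how far $Q$ can drift when $\phi\not\equiv0$. Differentiating and inserting \eqref{MainEqphi.b} gives, with $f(z)=|z|^pz$,
\begin{equation}\label{Qprime}
\frac{\d}{\d t}Q(t)=-(2\pi)^d\,{\rm Re}\Big[\,\overline{\theta'(t)}\,\fint\big(f(\psi)-f(\theta)\big)\Big].
\end{equation}
The crucial observation is that the part of $f(\theta+\phi)-f(\theta)$ linear in $\phi$, namely $|\theta|^p\phi+p|\theta|^{p-2}{\rm Re}(\phi\bar\theta)\,\theta$, has zero average over $\T^d$ because $\theta$ is constant and $\fint\phi=0$; consequently $\fint\big(f(\psi)-f(\theta)\big)$ is quadratic in $\phi$. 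Using the second-order bound $\big|f(\theta+\phi)-f(\theta)-\big(|\theta|^p\phi+p|\theta|^{p-2}{\rm Re}(\phi\bar\theta)\theta\big)\big|\le C(|\theta|+|\phi|)^{p-1}|\phi|^2$ together with the embedding $H^2(\T^d)\hookrightarrow L^\infty(\T^d)$ valid for $d\le3$, the right-hand side of \eqref{Qprime} is bounded by $C\,|\theta'(t)|\,\|\phi(t)\|_{H^2}^2$. Invoking the exponential decay \eqref{H2Decay} and the boundedness of $\theta'$ (which follows from the boundedness of $Q$) then yields
$$\big|Q_\infty-Q(0)\big|\le\int_0^\infty\Big|\tfrac{\d}{\d t}Q(t)\Big|\,\d t\le \frac{C}{\beta}\,\|\phi(0)\|_{H^2}^2 .$$

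With this estimate the positivity of $Q_\infty$ becomes robust: fixing $\theta(0),\theta'(0)$ small but nonzero and then choosing the zero-mean parts $\phi(0),\partial_t\phi(0)$ sufficiently small, one obtains $Q_\infty\ge Q(0)-C\beta^{-1}\|\phi(0)\|_{H^2}^2>0$, so an entire neighbourhood of the homogeneous data yields a strictly positive limit. The main obstacle, and the precise point where $\tfrac32\le p\le4$ enters, is the second-order estimate on $f(\theta+\phi)-f(\theta)$ underlying \eqref{Qprime}: the upper bound $p\le4$ keeps the nonlinearity energy-subcritical in $d\le3$, so that the $H^2$ well-posedness of Theorem \ref{CauchyPbThm} and the decay \eqref{H2Decay} of Theorem \ref{ThmDec} are available, while the lower bound $p\ge\tfrac32$ supplies the $C^2$-type regularity of $z\mapsto|z|^pz$ needed to expand $f$ to second order and thereby cancel the linear-in-$\phi$ contribution.
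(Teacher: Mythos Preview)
Your proposal is correct. In fact, your first observation---that spatially homogeneous data $\psi_0=a$, $\psi_1=b$ (constants with $(a,b)\neq(0,0)$ and small) already give $\phi\equiv0$ by uniqueness and hence $Q(t)\equiv Q(0)>0$---is a cleaner and more elementary proof of the theorem as stated than the one in the paper, and it does not even use the restriction on $p$. The paper does not isolate this trivial case; it goes directly to the perturbative argument you sketch in your second part, showing that for fixed $(\theta(0),\theta'(0))\neq(0,0)$ and sufficiently small $(\phi(0),\partial_t\phi(0))$ the total drift of $Q$ is too small to destroy positivity. Your perturbative argument follows the same strategy as the paper's, though organised differently: you bound the second-order Taylor remainder pointwise by $C(|\theta|+|\phi|)^{p-1}|\phi|^2$ and invoke $H^2\hookrightarrow L^\infty$, whereas the paper controls $|\theta'|\,|x\phi+\theta|^{p-1}|\phi|^2$ through the algebraic inequality $|z|\,|y|^{p-1}\le C(|y|+|z|^2+|y|^{p+2})$, arriving at $|Q'|\le C(\|\phi\|^2+\|\phi\|_{p+1}^{p+1})(Q+\sqrt Q)$ and then Gronwall. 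One small inaccuracy in your closing paragraph: the range $\tfrac32\le p\le4$ in the paper is \emph{not} there for the $H^2$ well-posedness (Theorem~\ref{CauchyPbThm} holds for all $p\ge0$ when $d\le3$) nor precisely for $C^2$ regularity of $f$; it is exactly the range for which the paper's algebraic inequality above is valid. Your own pointwise remainder bound actually works for any $p\ge1$.
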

\begin{proof}
Writing $\psi = \phi + \theta$ and  using the Taylor series with integral remainder, we find that 
\begin{eqnarray}\label{TaylorNonLin}
 f(\psi)&=&  f(\theta) +  \frac{\d}{\d x }f(x \phi + \theta)|_{x = 0} 
 +\int_0^1 \frac{\d^2}{\d x^2 }f(x \phi + \theta)|  ( 1- x) \d  x\notag\\
  & =& |\theta|^p \theta +\frac \eps 2 |\theta|^{p-2}( (p+2)|\theta|^2\phi  +p\, \theta^2 \bar \phi)  \\
  & +& \frac{p(p+2)}{4} \int_0^1 | x \phi + \theta|^{p-2}  (2  (x \phi + \theta) |\phi|^2 +\overline{(x \phi + \theta)} \phi^2) (1 - x)\d \sigma \notag\\
  &+& \frac{p(p-2)}{4}  \int_0^1 | x \phi + \theta|^{p-4}  (x \phi + \theta)^3 \bar \phi^2 (1 - x)\d \sigma. \notag
\end{eqnarray}
Since $\frac 32\leq p\leq 4,$ for any $x, y, z \in \C,$ we have 
$$|z| |y|^{p-1} \leq C (|y| + |z|^2 +  |y|^{p+2})$$
and then 
 \begin{align*}
 |x|^2 |x+y|^{p-1} |z|  & \leq C(|x|^{p+1} + |x|^2) (|z| |y|^{p-1} +|z|) \\
 & \leq C (|x|^{p+1} + |x|^2) (|z| + |y| + |z|^2 +  |y|^{p+2})
  \end{align*}
  for some constant $C = C(p) >0.$
  Thus,  we can write 
  \begin{align*}
  |Q'| & =(2\pi)^d \left|{\rm{Re}}\left(\left(\fint f(\psi)-  |\theta|^p \theta\right) \bar{\theta'}\right)\right|\\
  &\leq  C \fint \int_0^1 |\phi|^2 |x\phi + \theta|^{p-1}|\theta'| (1-x) \d x\\
  & \leq C (|\theta ' | + |\theta | + |\theta '|^2 +  |\theta|^{p+2}) \fint( |\phi|^{p+1} + |\phi|^2)\\
  & \leq  C ( \|\phi \|_{p+1}^{p+1} + \|\phi \|^2) (Q  + \sqrt{Q}),
  \end{align*}
  where $C = C(p)>0.$  Denote now $S := \frac 12 Q$ and $\eta(t) : = \frac 12 C \int_0^t  ( \|\phi(\tau) \|_{p+1}^{p+1} + \|\phi(\tau) \|^2)\d \tau.$ Thus,  we have  
  $$|S'|\leq \frac 12 C ( \|\phi \|_{p+1}^{p+1} + \|\phi \|^2) (S + 1).$$
 A direct application of Gronwall lemma implies 
 $$S(t) \geq \e^{-  \eta(t)} S(0) 
- \frac12 C  \int_0^t(\|\phi(\tau) \|_{p+1}^{p+1} + \|\phi(\tau) \|^2)\e^{  (\eta(\tau)-\eta(t))}  \d \tau.$$
Using \eqref{H2Decay} together with the Sobolev embedding $H^2(\T^d)\subset L^{p+1}(\T^d),$ we find that  there exist 
$\Gamma = \Gamma(\|\phi(0)\|_{H^2},  \|\partial_t\phi(0)\|_{H^2} )>0$
and $\gamma = \gamma(\|\psi_0\|_{H^2},  \|\psi_1\|)>0$ such that 
$$ \eta(t) \leq \frac{\Gamma}{\gamma},\,\, \forall t\geq 0.$$
Moreover,   following the development of the constant $\beta$ in \eqref{H2Decay} and for fixed  
$(\theta(0), \theta'(0)) \neq (0,0), $ we have $$\lim_{(\|\phi(0)\|_{H^2},  \|\partial_t\phi(0)\|_{H^2} ) \to (0,0)}\frac{\Gamma}{\gamma} = 0. $$
Thus,  there exist   $\psi_0,  \psi_1 \in H^2$ and $\delta >0 $ such that 
$$S(0) > \left( \frac{\Gamma}{\gamma} +\delta \right) \e^{\frac{\Gamma}{\gamma}} \geq (\eta(t) + \delta)  \e^{  \eta(t)},$$
which implies that  $$\lim_{t\to\infty} S(t) \geq \delta > 0$$
and completes the proof.
\end{proof}

\section{Numerical tests}
In this section we study numerically the energy asymptotics for \eqref{MainEq} with $L = -\Delta,$ $p =2$ and $d = 1,  2.$ The undamped equation ($L = 0$) has a conserved 
quantity $E(\psi).$ Then,  for long time simulations, one wants to construct numerical methods that approximately conserve this energy .  When using Fourier spectral methods, we primarily need to ensure that the time discretization preserves these property,  since the spectral spatial discretization will typically automatically satisfy it.  We suggest therefore to use 
the time stepping discretization $$UD_n := \frac{\psi_{n+1}- 2 \psi_n + \psi_{n-1}}{(\delta t)^2} +  (I - \Delta)\frac{\psi_{n+1}+2 \psi_n + \psi_{n-1}}{4} + |\psi_n|^2\psi_n, $$ 
where $\psi_n$ is the approximation of  $\psi(n\, \delta t)$ and $\delta t$ is the time step.   Thus, the scheme is given by
\begin{equation}
UD_n - \Delta \frac{\psi_{n+1}-\psi_n}{\delta t}= 0. 
\end{equation}
For the approximation of $E(\psi),$ we consider the quantity 
$$E_n(\psi) := \frac 12 \int_{\T^d} \left|\frac{\psi_n-\psi_{n-1}}{\delta t}\right|^2 +  \left|\frac{\psi_n+\psi_{n-1}}{2}\right|^2 +  \left| \nabla \frac{\psi_n+\psi_{n-1}}{2}\right|^2
+ \frac 14 \int_{\T^d} \left|\frac{\psi_n+\psi_{n-1}}{2}\right|^4  $$
which is approximately conserved for the scheme 
$$UD_n=0$$
discretizing the undamped equation  \eqref{MainEq} with ($L= 0$).  We study the long time asymptotic behaviour  of 
$E_n(\psi),$  
$$ E_n(\phi): = \frac 12 \int_{\T^d} \left|\frac{\phi_n-\phi_{n-1}}{\delta t}\right|^2 +  \left|\frac{\phi_n+\phi_{n-1}}{2}\right|^2 +  \left| \nabla \frac{\phi_n+\phi_{n-1}}{2}\right|^2
+ \frac 14 \int_{\T^d} \left|\frac{\phi_n+\phi_{n-1}}{2}\right|^4$$
and $$Q_n(\theta)  :=  (2 \pi)^d \left(\frac 12 |\theta_n|^2 + \frac 12 \left|\frac{\theta_n-\theta_{n-1}}{\delta t}\right|^2 + \frac 14 |\theta_n|^4\right),$$
where $\theta_n$ is the zero coefficient of the discrete Fourier transform applied on $\psi_n$ and  $\phi_n := \psi_n - \theta_n.$

\begin{figure}

\begin{subfigure}{0.47\textwidth}
\includegraphics[width=\textwidth]{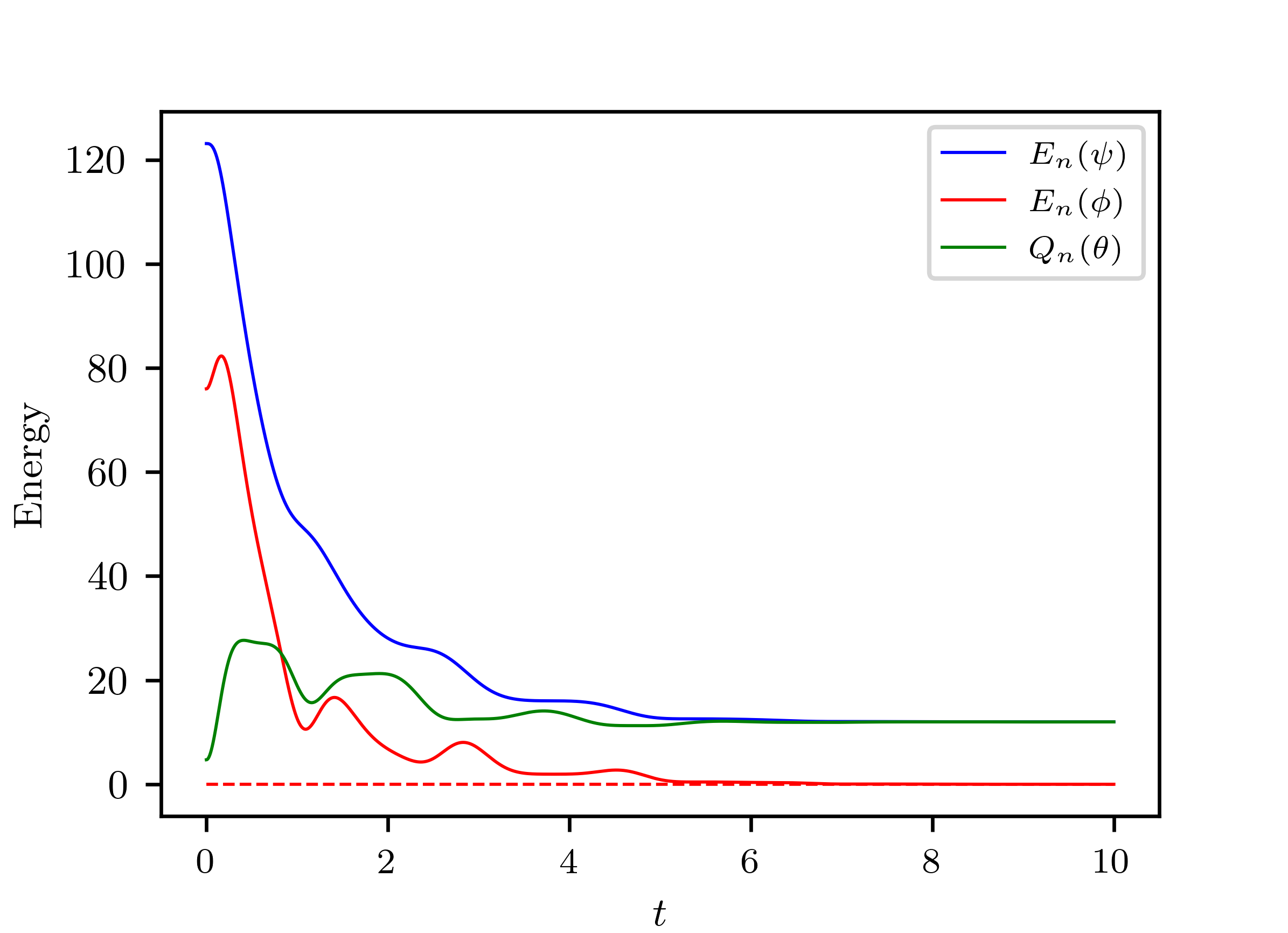} 
\label{subim1d1}
\end{subfigure}
\begin{subfigure}{0.47\textwidth}
\includegraphics[width=\textwidth]{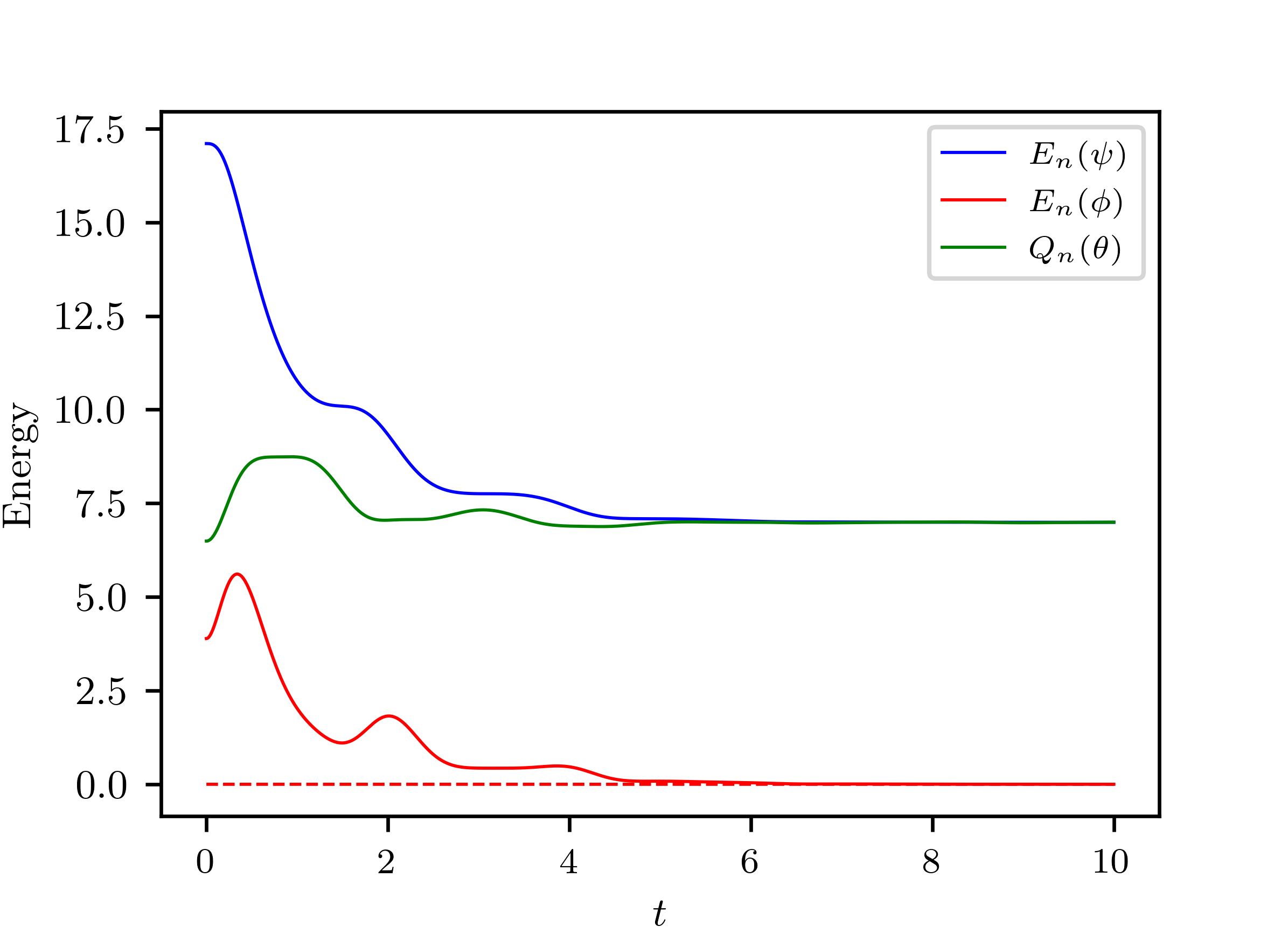}
\label{subim1d2}
\end{subfigure}

\caption{Approximate representation for long time asymptotics of $E(\psi),  E(\phi)$ and $Q$ by
$E_n(\psi),  E_n(\phi)$ and $Q_n$ respectively with $d=1,\,\,$ $\psi_1(x) = 0$ and left: $\psi_0(x) = 1+3\cos(x);$  right: 
$\psi_0(x) = (1+0.5\cos(x))^2.$  }
\label{imag1d}
\end{figure}

\begin{figure}

\begin{subfigure}{0.47\textwidth}
\includegraphics[width=\textwidth]{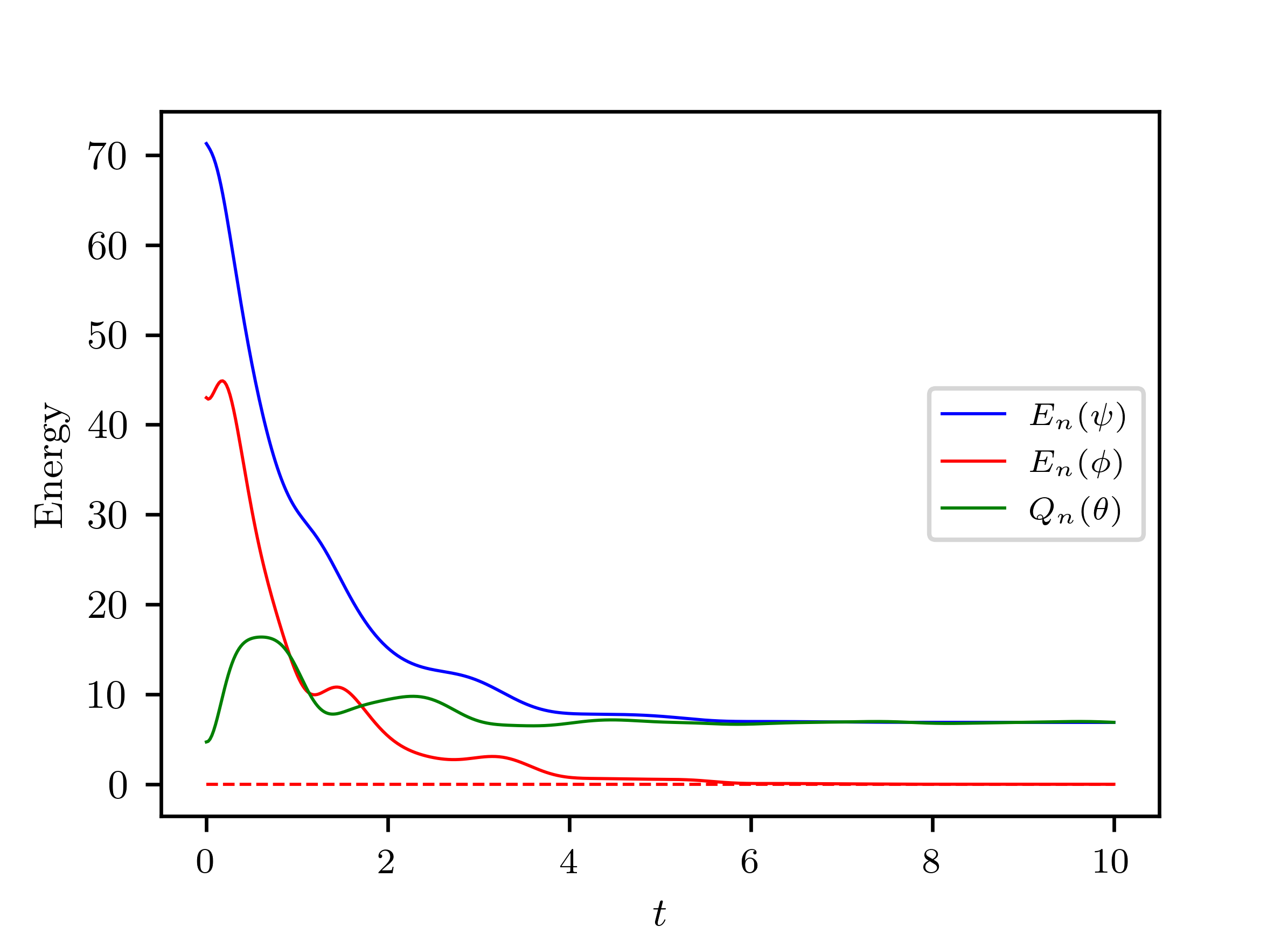} 
\label{subim2d1}
\end{subfigure}
\begin{subfigure}{0.47\textwidth}
\includegraphics[width=\textwidth]{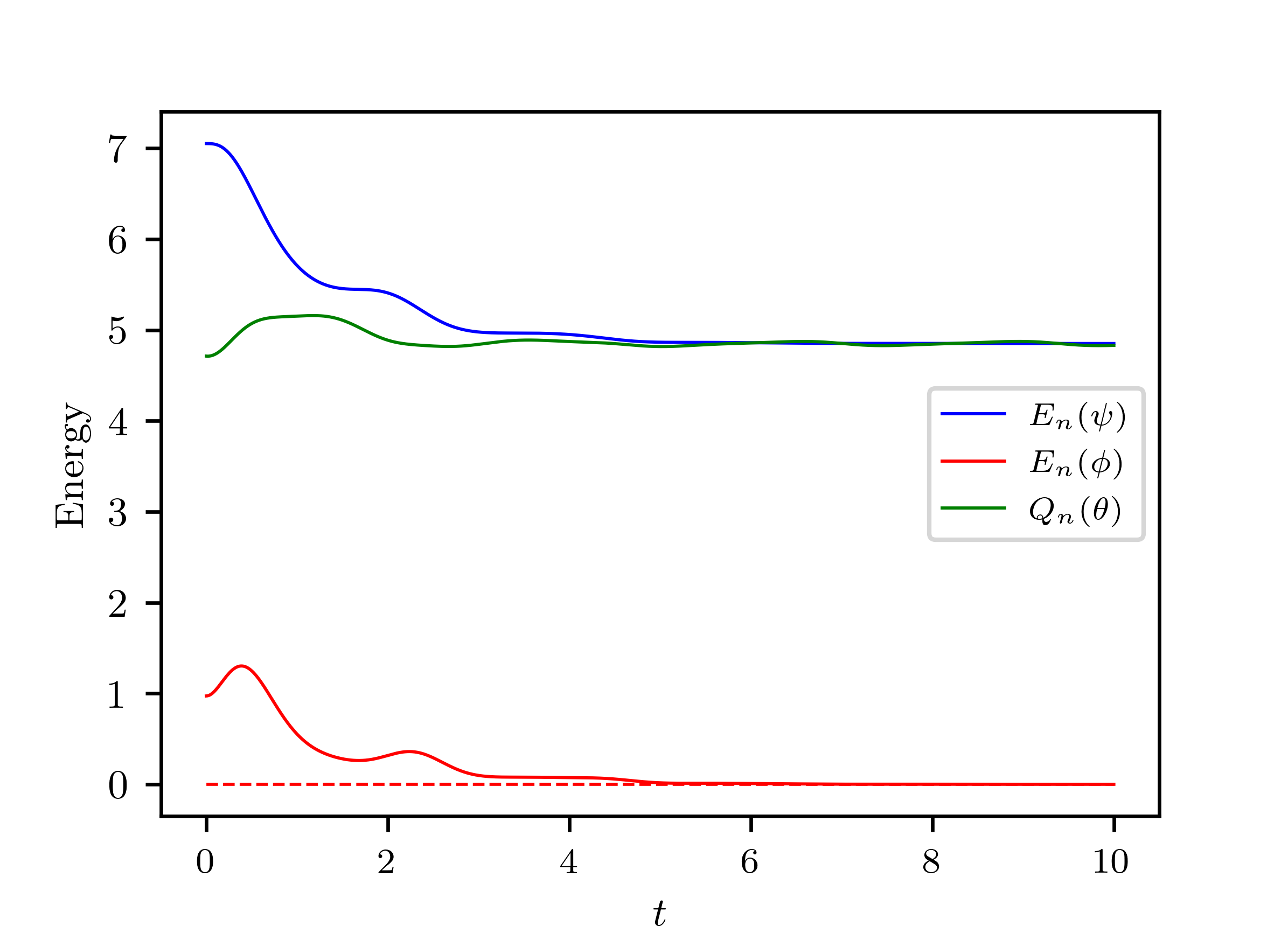}
\label{subim2d2}
\end{subfigure}

\caption{Approximate representation for long time asymptotics of $E(\psi),  E(\phi)$ and $Q$ by
$E_n(\psi),  E_n(\phi)$ and $Q_n$ respectively with $d=2,\,\,$  and left: $\psi_0(x, y) = 1+\cos(x) + 2 \cos(y),\,\,  \psi_1(x,  y) = \sin(x) + 2 \sin(y);$  right: 
$\psi_0(x, y) = 1+0.2\cos(x) + 0.5 \cos(y),\,\,  \psi_1(x, y) = 0.$  }
\label{imag2d}
\end{figure}

\begin{figure}

\begin{subfigure}{0.47\textwidth}
\includegraphics[width=\textwidth]{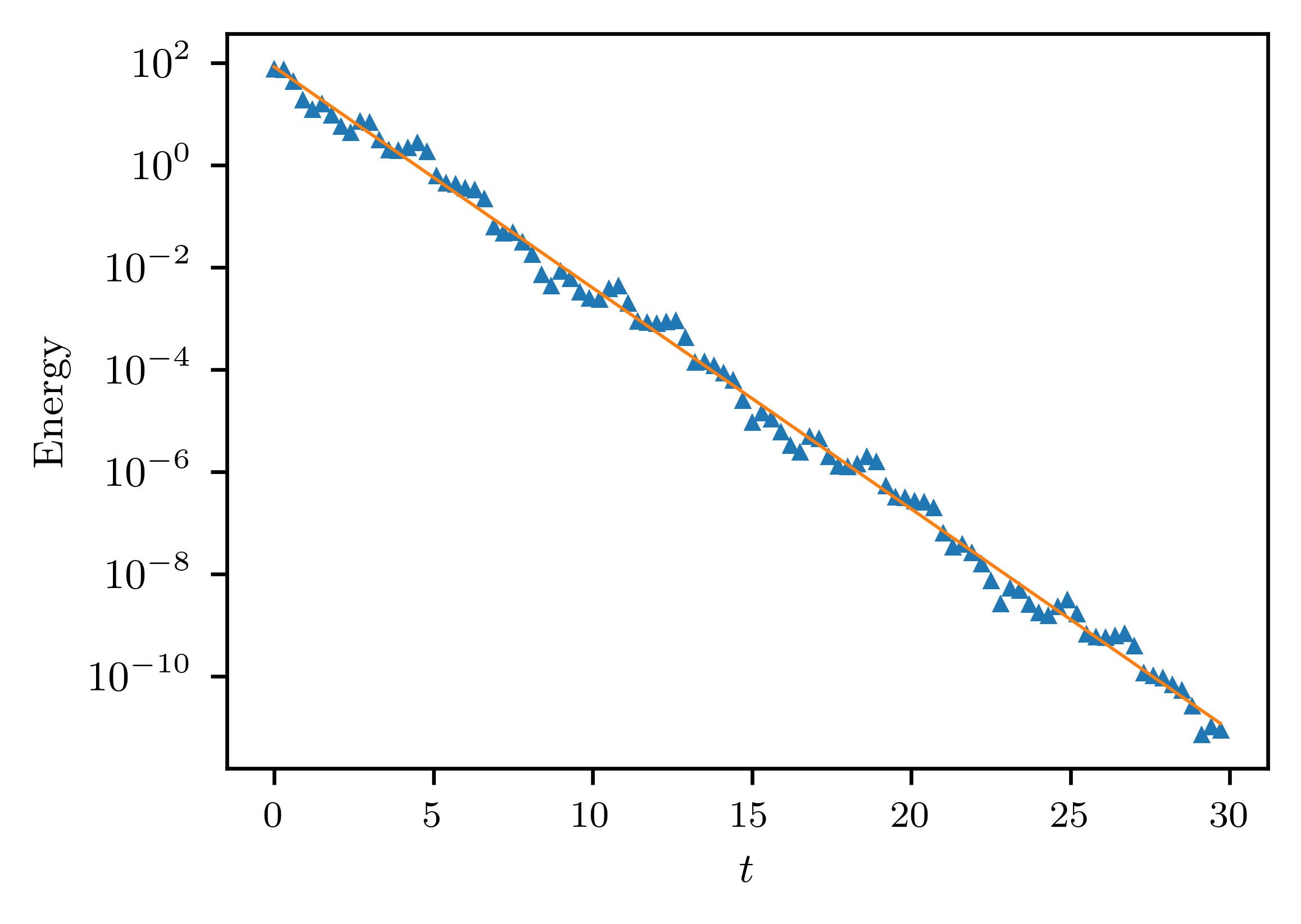} 
\label{subimEnExpDec1d1}
\end{subfigure}
\begin{subfigure}{0.47\textwidth}
\includegraphics[width=\textwidth]{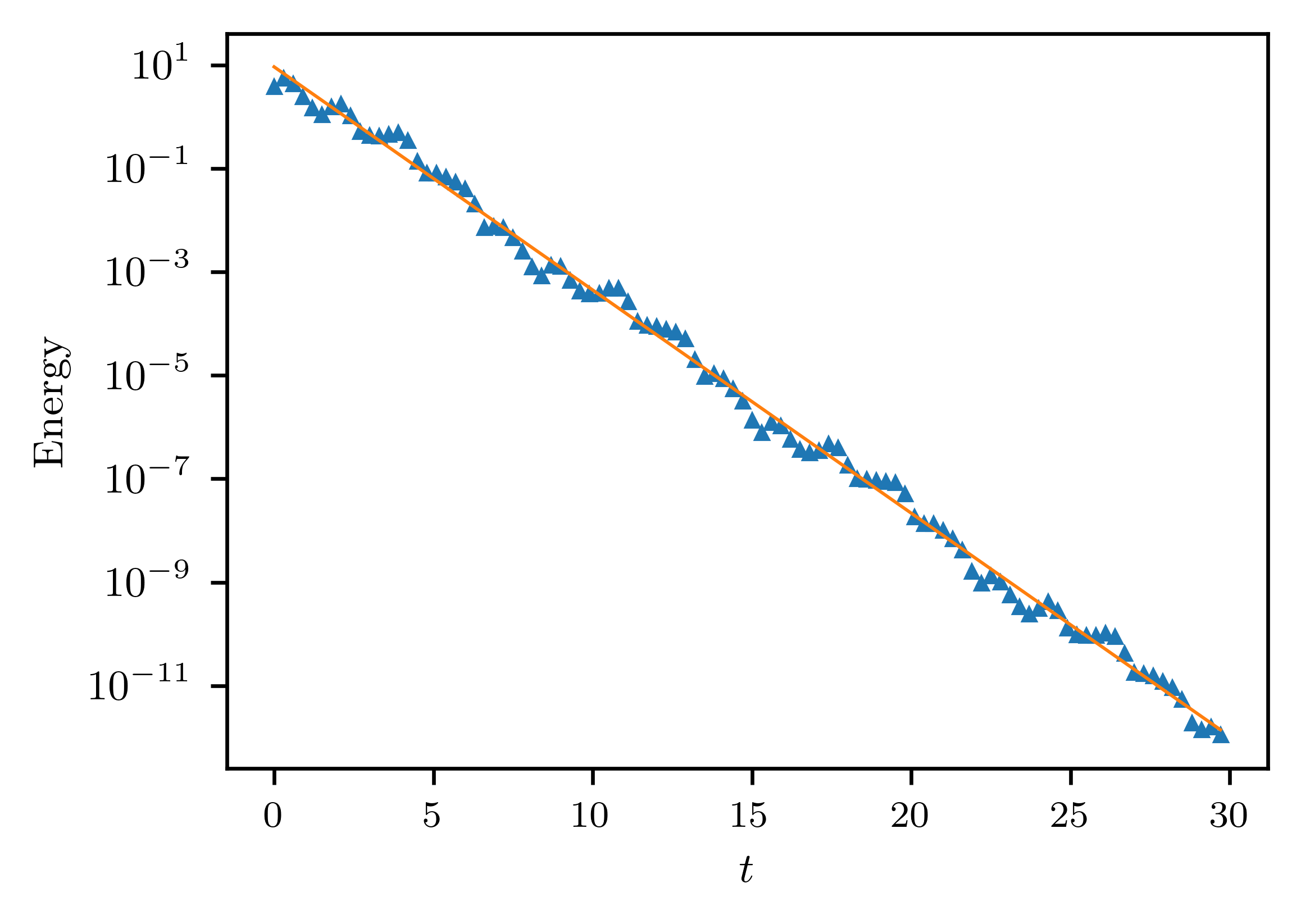}
\label{subimEnExpDec1d2}
\end{subfigure}

\caption{Time decay of $ E_n(\phi)$ when  $d=1,\,\,$ $\psi_1(x) = 0$ and left: $\psi_0(x) = 1+3\cos(x);$  right: 
$\psi_0(x) = (1+0.5\cos(x))^2.$  }
\label{imagEnExpDec1d}
\end{figure}

\begin{figure}

\begin{subfigure}{0.47\textwidth}
\includegraphics[width=\textwidth]{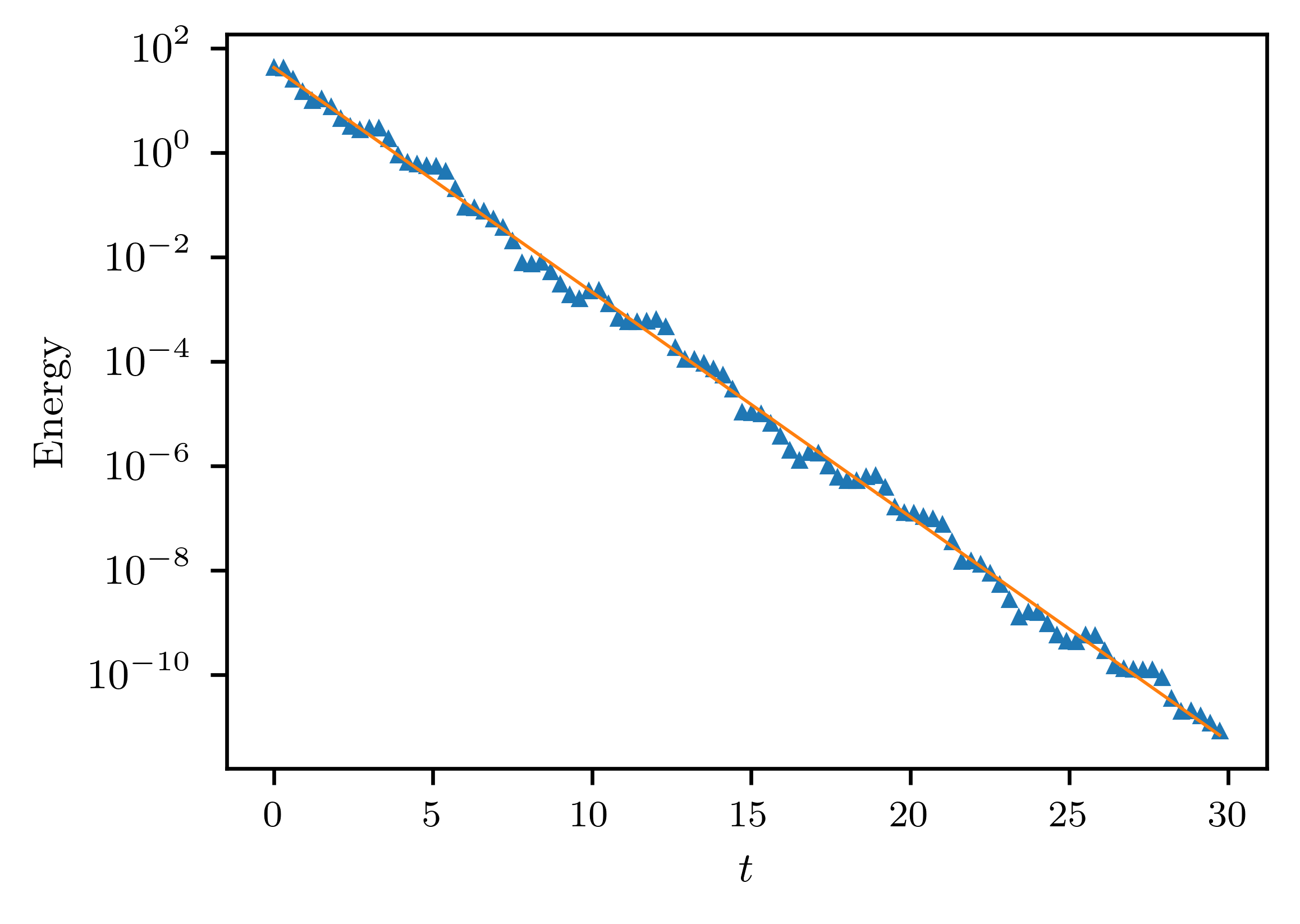} 
\label{subimEnExpDec2d1}
\end{subfigure}
\begin{subfigure}{0.47\textwidth}
\includegraphics[width=\textwidth]{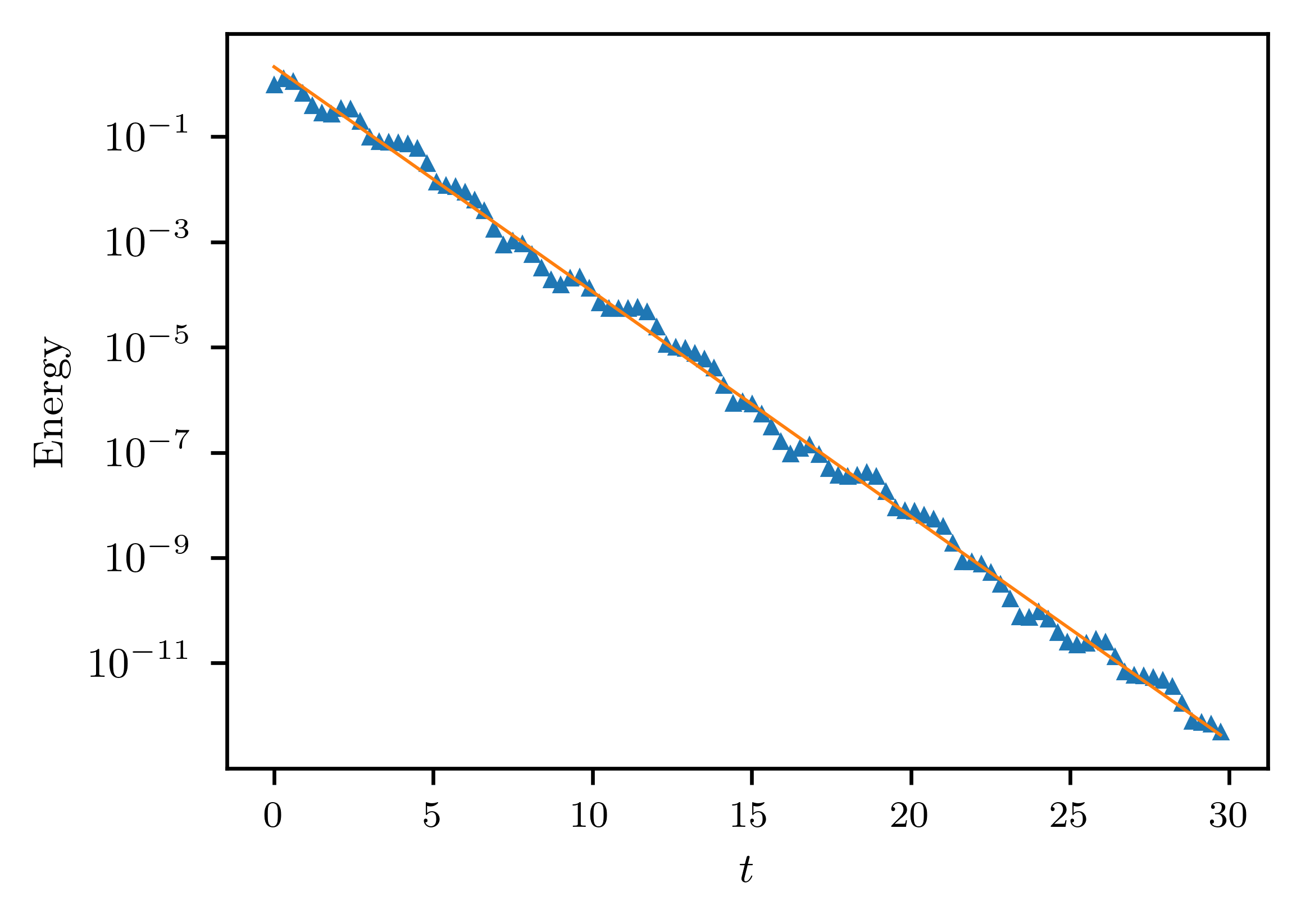}
\label{subimEnExpDec2d2}
\end{subfigure}

\caption{Time decay of $E_n(\phi)$ when $d=2,\,\,$  and left: $\psi_0(x, y) = 1+\cos(x) + 2 \cos(y),\,\,  \psi_1(x,  y) = \sin(x) + 2 \sin(y);$  right: 
$\psi_0(x, y) = 1+0.2\cos(x) + 0.5 \cos(y),\,\,  \psi_1(x, y) = 0.$  }
\label{imagEnExpDec2d}
\end{figure}
\newpage
\section{Discussion}
Our numerical results show,  for the solutions defined on the one and two dimensional spacial domains ($\T^d,\,\,  d = 1,  2$), that $E(\psi)$ and $Q(\theta)$ 
converge to  the same limit (Figures \ref{imag1d} and \ref{imag2d}) and that $E(\phi)$ decays  exponentially to zero  (Figures \ref{imagEnExpDec1d} and \ref{imagEnExpDec2d}) regardless of the assumptions of Theorem \ref{ThmDec} made on the initial data $\psi_0, \psi_1.$ In other words, 
we  believe that the   results of Theorem \ref{ThmDec}  are still valid for weaker assumptions on the initial data
and we leave the proof of such results as an open problem.  In general, the methods used in the literature (for example in \cite{AlouiIbrahimNakanishi:2010:ExpDecayDKG1,  DehmanLebeauZuazua:2003:StabCrlSWE,  JolyLaurent:2013:StabSWE}) to study the energy decay for the weakly damped Klein-Gordon equation having the damping operator $L = \gamma(x) I$ relay essentially on the fact that $L$ is bounded in the given functional setting which is not the case for the strongly damped equation with $L = -\Delta.$ Moreover,  in view of Theorem \ref{ThmNon0Dec},  the energy for the strongly damped equation doesn't decay necessarily to zero.   These two main differences make the 
methods used for the weakly damped equation unapplicable to study the energy asymptotics for the strongly damped one and lead to think differently to address the above open question.

\section*{Acknowledgments}
The author thanks Marcel Oliver for useful discussions.  The work was supported by the German Research Foundation grant
MO 4162/1-1.  The author further acknowledges support through German Research Foundations Collaborative Research Center TRR 181.  
 \bibliographystyle{siam}
\bibliography{kg.bib}
\end{document}